\documentclass[leqno,11pt,english]{amsart}
\usepackage{amsthm}
\usepackage{amstext}
\usepackage{amssymb}
\usepackage{mathrsfs}
\usepackage{amsmath}
 
\textwidth=14.5cm 
\textheight=19cm 
\oddsidemargin=1cm 
\oddsidemargin  = 8.2 mm
\evensidemargin = 0.5 cm
\textwidth      = 15 cm
\textheight     = 21 cm
\headheight     = 0.7 cm
\topskip        = 0 cm
\topmargin      = -.5 cm
\usepackage{xcolor}
\usepackage[all]{xy}
\usepackage{babel}
 \newtheorem{theorem}{Theorem}[section]

\newtheorem{proposition}[theorem]{Proposition}

\newtheorem{lemma}[theorem]{Lemma}
\theoremstyle{definition}
\newtheorem{definition}[theorem]{Definition}
\newtheorem{remark}[theorem]{Remark}
\numberwithin{equation}{section}
\newtheorem{example}[theorem]{Example}
\def \C {\mathbb{C}}

\def \fol {{\mathscr{F}}}

\def \P {\mathbb{P}}

\def \sing {{\rm Sing}}
\def \dim {{\rm dim}}

\begin{document}

\title[On the Milnor number of one-dimensional foliations]{On the Milnor number of non-isolated singularities of holomorphic foliations and its topological invariance}

\author[A. Fern\'andez-P\'erez]{Arturo Fern\'andez-P\'erez}
\address{Arturo Fern\'andez P\'erez \\
ICEx - UFMG \\
Departamento de Matem\'atica \\
Av. Ant\^onio Carlos 6627 \\
30123-970 Belo Horizonte MG, Brazil} 
\email{fernandez@ufmg.br}

\author{Gilcione Nonato Costa}
\address{Gilcione Nonato Costa \\
ICEx - UFMG \\
Departamento de Matem\'atica \\
Av. Ant\^onio Carlos 6627 \\
30123-970 Belo Horizonte MG, Brazil} \email{gilcione@mat.ufmg.br}
\author{Rudy Rosas Baz\'an}
\address{Rudy Rosas Baz\'an\\ Dpto. Ciencias - Secci\'on Matem\'aticas, 
Pontif\'icia Universidad Cat\'olica del Per\'u, Av Universitaria 1801, Lima, Per\'u.}
\email{rudy.rosas@pucp.pe}

\thanks{
The first-named author is supported by CNPq-Brazil Grant Number 302790/2019-5 and Pronex-Faperj. The third-named is supported by Vicerrectorado de Investigaci\'on de la Pontificia Universidad Cat\'olica del Per\'u.}

\subjclass[2010]{Primary 32S65 - 58K45}
\keywords{ Holomorphic foliations - Vector fields - Milnor number - Non-isolated singularities}

\begin{abstract}
We define the Milnor number -- as the intersection number of two holomorphic sections --  of a one-dimensional holomorphic foliation $\fol$ with respect to a compact connected component $C$ of its singular set. Under certain conditions, we prove that the Milnor number of $\fol$ on a three-dimensional manifold with respect to $C$ is invariant by $C^1$ topological equivalences. 
%As consequence, we obtain the same conclusion for Milnor-Parusi\'nski-Aluffi's number of complex hypersurfaces.
\end{abstract}
\maketitle

\section{Introduction}
\par One of the most studied invariants in Singularity theory is the \textit{Milnor number} of a complex hypersurface, such number was defined by Milnor \cite{milnor}. In Foliation theory, for a holomorphic vector field $v=P(x,y)\frac{\partial}{\partial{x}}+Q(x,y)\frac{\partial}{\partial{y}}$ in $\C^2$, the Milnor number arises initially as the \textit{intersection number} of the curves $P(x,y)=0$ and $Q(x,y)=0$ in Seidenberg \cite{seidenberg} and Van den Essen \cite{essen}, however, both authors did not call it Milnor number. The first authors to establish the denomination of \textit{Milnor number for holomorphic foliations} have been 
Camacho, Lins Neto, and Sad in \cite{CSL}. They proved that the Milnor number of a one-dimensional holomorphic foliation with an isolated singularity is a \textit{topological invariant}, see \cite[Theorem A, p. 149]{CSL}. 
%foliation induced by a \textit{gradient vector field} of a holomorphic function, this number coincides with the usual Milnor number for complex hypersurfaces. 
\par The proposal to investigate the Milnor number of foliations with non-isolated singularities arises naturally. In the case of complex hypersurfaces, such a study was done by Parusi\'nski \cite{parusinski} and later generalized to the category of schemes by Aluffi \cite{aluffi}. Motivated by these studies, in this paper we will adapt Parusi\'nski's definition to define the Milnor number of a one-dimensional holomorphic foliation with non-isolated singularities. For foliations on smooth algebraic varieties, we will apply \textit{Fulton's intersection theory} \cite{fulton} to obtain an explicit formula for the Milnor number of a foliation in terms of the Chern and Segre classes. In a similar way to Parusi\'nski  and Aluffi, our definition is given by the intersection number of the two sections of a holomorphic vector bundle associated to foliation, specifically such an intersection number is known in Fulton's theory by the \textit{excess intersection}.
\par Before establishing the objectives of this paper, we will give some notations and results. Let $\fol$ be a one-dimensional holomorphic foliation in an open subset $U$ of $\C^n$ induced by a holomorphic vector field $v$ in $U$. 
The \textit{Milnor number} of $\fol$ at $p\in U$ is 
\begin{equation}\label{milnor}
\mu(\fol,p)=\dim_{\mathbb{C}}\frac{\mathcal{O}_{n,p}}{\displaystyle(P_1,\ldots,P_n)}
\end{equation}
where $\mathcal{O}_{n,p}$ is the ring of germs of holomorphic functions at $p$ and $(P_1,\ldots,P_n)$
is the ideal generated by the germs at $p\in U$ of the coordinate functions of $v$. Note that $\mu(\fol,p)$ is finite if and only if $p\in U$ is an isolated singularity of $v$. Moreover, it follows from \cite[p. 123]{fulton} that the Milnor number of $\fol$ at $p$ agrees with the \textit{intersection number} at $p$ of the divisors $D_i=\{P_i=0\}$ $\forall$ $i=1,\ldots,n$, i.e.,
\[\mu(\fol,p)=i_p(D_1,\ldots,D_n).\]
\par When $p$ is an isolated singularity of $\fol$, 
$\mu(\fol,p)$ is a \textit{topological invariant} of $\fol$ provided that $n\geq 2$ as proved in \cite[Theorem A]{CSL}. More specifically, if $\fol$ and $\fol'$ are one-dimensional holomorphic foliations locally topologically equivalent at $p$ and $p'$ respectively, that is, there is a homeomorphism $\phi$ between neighborhoods of $p$ and $p'$ taking leaves of $\fol$ to leaves of $\fol'$ with $\phi(p)=p'$. Then \[\mu(\fol,p)=\mu(\fol',p').\] 

\par Our first aim is to define the \textit{Milnor number of foliations with non-isolated singularities}. We will give a definition that works for any compact connected component of the singular set of such a foliation (see Definition \ref{milnor_defi}), 
and we will show that it is a generalization of the usual Milnor number of a foliation with an isolated singularity. 
\par Second, motivated by \cite{CSL}, 
we study the \textit{topological invariance problem} for the Milnor number of a one-dimensional foliation with a non-isolated singularity. Under some conditions, we solve the problem for one-dimensional foliations on three-dimensional complex manifolds (see Theorem \ref{c1inv}), and moreover we explain the reason why our proof does not adapt to arbitrary dimensions (see Remark \ref{obs1}). 
\par We remark that if $X$ is a complex hypersurface with non-isolated singularities on a complex manifold $M$, the difference $c_i^{SM}(X)-c_i^{FJ}(X)$ of the Schwartz-MacPherson and Fulton-Johnson classes is, for each $i$, a homology class with support in the homology $H_{2i}(\sing(X))$ of the singular set of $X$. This difference is called the \textit{Milnor class of degree $i$} of $X$, see for instance \cite{Brasselet} and \cite[p. 194]{libro_Bra}. The $0$-degree class of $c_i^{SM}(X)-c_i^{FJ}(X)$ coincides with the Milnor-Parusi\'nski-Aluffi's number of a complex hypersurface defined in \cite{aluffi} and \cite{parusinski}. In a future paper, we hope to study and describe the Milnor class of a one-dimensional holomorphic foliation on a complex manifold.  
\par The paper is organized as follows: in Section \ref{defi}, we define the concept of one-dimensional holomorphic foliations on complex manifolds. Section \ref{def_milnor} is devoted to the definition of the Milnor number (as intersection number) of a one-dimensional foliation with respect to a compact connected component of its singular set. In Section \ref{exa}, we give two examples of foliations on the three-dimensional complex projective space with non-isolated singularities where its Milnor number is exhibited. 
In Section \ref{The Milnor}, we explain the reason why the Milnor number of foliations along non-isolated singularities can not be defined as the Poincar\'e-Hopf index of any vector field. In Section \ref{Theorem_1}, we prove the main result of the paper  Theorem \ref{c1inv}. Such theorem asserts, under certain conditions, that the Milnor number of a foliation is a topological invariant. 
%Finally, in Section \ref{paru}, we obtain that the Milnor number of a complex hypersurface defined by Parusi\'nski and Aluffi \cite{aluffi,parusinski} is a topological invariant in the case of three-dimensional complex manifolds. 

\section{One-dimensional holomorphic foliations}\label{defi}
Let $M$ be an $n$-dimensional complex manifold. A one-dimensional holomorphic foliation $\fol$ on $M$ may be defined as follows: we take an open covering $\{U_j\}_{j\in I}$ of $M$ and on each $U_j$ a holomorphic vector field $v_j$ with zeros set of codimension at least 2, and we require that on $U_j\cap U_i$ the vector fields $v_j$ and $v_i$ coincide up to multiplication by a nowhere vanishing holomorphic function:
\[v_i=g_{ij}v_j\,\,\,\,\,\,\,\text{on}\,\,U_i\cap U_j,\,\,\,\,\,\,g_{ij}\in\mathcal{O}^{*}_M(U_i\cap U_j).\]
This means that the local integral curves of $v_i$ and $v_j$ glue together, up reparametrization, giving the so-called \textit{leaves} of $\fol$. Then $\fol$ is an equivalence class of collection $\{U_j,v_j\}_{j\in I}$, where the equivalence relation is given by:  $\{U_j,v_j\}_{j\in I}\sim\{U'_j,v'_j\}_{j\in I'}$ if $v_j$ and $v'_i$ coincide on $U_j\cap U'_{i}$ up to multiplication by a nowhere vanishing holomorphic function. The \textit{singular set} $\sing(\fol)$ of $\fol$ is the complex subvariety of $M$ defined by 
\[\sing(\fol)\cap U_j:=\sing(v_j),\,\,\,\,\,\,\forall j\in I.\]
\par The functions $g_{ij}\in\mathcal{O}^*_M(U_i\cap U_j)$ form a multiplicative cocycle and hence give a cohomology class in $H^{1}(M,\mathcal{O}^*_M)$, that is a line bundle on $M$ so called \textit{cotangent bundle} of $\fol$, and denoted by $T^*_{\fol}$. Its dual $T_\fol$, is represented by the inverse cocycle $\{g_{ij}^{-1}\}$, is called \textit{tangent bundle} of $\fol$.
\par The relations $v_i=g_{ij}v_j$ on $U_i\cap U_j$ can be glued to a global holomorphic section $s$ of $TM\otimes T^*_{\fol}$. Since each $v_j$ has zeros set of codimension at least 2, $s$ also has zeros set of codimension at least 2. Note that $s$ is not entirely intrinsically defined  by $\fol$: if we change from $\{U_j,v_j\}_{j\in I}$ to $\{U_j,fv_j\}_{j\in I}$, where $f\in\mathcal{O}^{*}_M(M)$, then $s$ will be replaced by $fs$. But it is not problem to definition of $\fol$ by global sections and only ambiguity. 
\par A complex hypersurface $V$ in $M$ is said to be \textit{invariant by $\fol$} if 
\[v_j(f_j)=h_j f_j\,\,\,\,\,\,\,\,\,\forall j\in I,\]
where $V\cap U_j=\{f_j=0\}$ and $h_j\in\mathcal{O}(U_j)$.
\section{The Milnor number as intersection number}\label{def_milnor}

Let $\fol$ be a one-dimensional holomorphic foliation on $M$. Suppose that $\textrm{Sing}(\fol)$  has complex codimension at least $2$. By definition, $\fol$ is given by a section $s:M\to E:=TM\otimes T^*_{\fol}$ with zero set $\textrm{Sing}(\fol)$. Let $C$ be a compact connected component of $\sing(\fol)$, we have the fiber square 
\[\xymatrix{
{C}\ar[d]_i\ar[r]^{i} &
M\ar[d]^{s} \\
M\ar[r]_{s_0} & E
}\]
where $i$ is the canonical inclusion and $s_0$ is the zero section of $E$. 
Let $U$ be a small neighborhood of $C$. We follow Parusi\'nski \cite[p. 248]{parusinski} to consider $ind_U(s)$ the \textit{intersection number} over $U$ of $s$ and the zero section $s_0$ of $E$. 
Parusi\'nski remarked that if $s'$ is a small perturbation of $s$ transversal to the zero section, then $ind_U(s)$ equals the number of zeros of $s'$ counted with signs (local indices). 
Moreover, $ind_U(s)$ depends only on the homotopy class of $s|_{\partial{U}}$ in the space of nowhere zero section of $E|_{\partial{U}}$ and if $E$ is trivial this definition agrees with that of the topological degree (see for instance \cite{amann}). Using standard homotopy arguments, it is easy to prove that $ind_U(s)$ depends only on $s$ and $C$, so in order to standardize the notation with the intersection theory in Algebraic Geometry, we shall denote $i_C(s,s_0)$ the number $ind_U(s)$. 
\begin{definition}\label{milnor_defi}
We define the \textit{Milnor number of $\fol$ at $C$} by
\[\mu(\fol,C)=i_C(s,s_0).\]
\end{definition}
\begin{remark}
If $M$ is a smooth $n$-dimensional algebraic variety and $C$ is smooth, 
we can apply Fulton's intersection theory (see for instance \cite[Proposition 6.1]{fulton} or \cite[p. 328]{aluffi}) to define the \textit{intersection number} between the sections $s_0$ and $s$ along $C$ as
\begin{equation}\label{ful_1}
i_C(s,s_0)=\{c(TM\otimes T^*_{\fol})\cap s(C,M)\}_0\in A_0(C),
\end{equation}
where $A_0(C)$ is the Chow group of $S$ of degree zero. Here and in the following $c$ denotes \textit{total Chern class} and $s$ is the \textit{Segre class} (in the sense of \cite{fulton}), moreover pullback notations are omitted when there is no ambiguity. According to Fulton \cite[p. 153]{fulton} (see also Eisenbud-Harris \cite[p. 458]{eisenbud}), it definition of Minor number for $\fol$ works for compact connected components of $\sing(\fol)$.
\end{remark}
%\begin{definition}
%We define the \textit{Milnor number of $\fol$ at $S$} as 
%\[\mu(\fol,S):=i_S(s,s_0).\]
%\end{definition}
\begin{remark}
 If  $C=\{p\}$ is an isolated singularity of $\fol$, it follows from \cite[Proposition 8.2]{fulton} that 
 \[\mu(\fol,p)=length(\mathcal{O}_{s\cap s_0,p}).\]
If $s$ is locally generated at $p$ by 
$v=P_1(z_1,\ldots,z_n)\frac{\partial}{\partial{z_1}}+\ldots+P_n(z_1,\ldots,z_n)\frac{\partial}{\partial{z_n}},$
where $P_1\ldots,P_n\in\mathcal{O}_{n,p}$, we get
\[\mu(\fol,p)=\dim_{\C}\frac{\mathcal{O}_{n,p}}{(P_1,\ldots,P_n)}.\]
\end{remark}
\par If the singularities of $\fol$ are all isolated, the Baum-Bott formula \cite{BB} says
\begin{equation}\label{Baum}
\sum_{p\in\sing(\fol)}\mu(\fol,p)=c_n(TM\otimes T^*_{\fol})\cap [M],
\end{equation}
where $c_n$ denotes the top Chern class. 
\par Let us consider $M=\P^n$ and $\fol$ be a one-dimensional foliation of degree $d$ in $\P^n$. The degree $d$ of $\fol$ is the number of tangencies between $\fol$ and a generic hyperplane. It is not difficult to prove that $T^{*}_{\fol}=\mathcal{O}(d-1)$ so that $\fol$ is given by a global section $s$ of $T\P^n(d-1)$. When all the singularities of $\fol$ are isolated, we get from equality \ref{Baum} that
\[\sum_{p\in\sing(\fol)}\mu(\fol,p)=\sum^{n}_{i=0}d^i.\]
\par On the other hand, when the scheme singular of $\fol$ is formed by a disjoint union of proper smooth subschemes $C$ and $F$, where $F$ finite, then it follows from Vainsencher \cite[p. 81]{israel} that
\[\mu(\fol,C)=\sum_{i\geq 0}\int_{\P^n}c_{n-(1+i)}(T\P^n(d-1)) s_{1+i}(C,\P^n)\]
and 
\[\sum_{p\in F}\mu(\fol,p)+\mu(\fol,C)=\sum^{n}_{i=0}d^{i}.\]
\par For a study on the number of residual isolated singularities of foliations on complex projective spaces, see \cite{costa, gilcione, arturo} and the interested reader may consult \cite{cavalcante, jardim} for results about the classification of one-dimensional foliations of low degree on threefolds. 
\section{Examples}\label{exa}
%In order to compute explicitly the Minor number of foliations along nonisolated singularities we will use the \textit{principle of continuity} and \textit{Fulton's dynamic intersections} (see for instance \cite[\S10 and \S 11]{fulton} and \cite[p. 456]{eisenbud}). 
%\par Let $\fol$ be a one-dimensional foliation on $M$ and $s$ be the section of $TM\otimes T^{*}_{\fol}$ defining $\fol$. Then the principle of continuity asserts that we 
%may deform $s$ to $s'_t$ with $s'_0=s$, where $\{s'_t\}_{t\in T}$ is a family of sections of $TM\otimes T^{*}_{\fol}$ parametrized by a non-singular curve $T$ with $0\in T$, such that $s'_t$ meets $s_0$ properly for generic $t$ at points $p_1^{t},\ldots p_k^{t}$ and as consequence we obtain 
%\[\mu(\fol,S)=\lim_{t\to 0}\sum_{p^{t}_j\to S}i_{p^{t}_j}(s'_t,s_0).\]
%\par We refer \cite{bracci} to see a recent theory of deformation (or principle of continuity) of foliations. 
In this section, we give some examples of one-dimensional holomorphic foliations on $\P^3$ with non-isolated singularities where its Milnor number is computed.
\begin{example} Let us consider the
foliation $\fol_0$ of degree $2$ in $\P^3$ defined in the
open affine set $U_3=\{[\xi_0:\xi_1:\xi_2:\xi_3]\in\P^3:\xi_3\neq0\}$ by the vector
field
$$X_0(z)=z_1^2\frac{\partial}{\partial z_1}+z_1^2\frac{\partial}{\partial
z_2}+z_2^2\frac{\partial}{\partial z_3}$$ where
$z_i=\xi_{i-1}/\xi_3$ for $i=1,2,3$.
Let $C=\{\xi_0=\xi_1=0\}$, then the singular set of $\fol_0$
is 
\[\sing(\fol_0)=C\cup\{p\},\] where $p=[1:1:1:0]$. It is not difficult  to see $\mu(\fol_0,p)=1$ which implies that $$\mu(\fol_0,C)=14.$$ In fact, let $\fol_t$ be a generic
perturbation of $\fol_0$, $0 < |t| < \epsilon$, with $\epsilon$ sufficiently small, described in $U_3$ by the vector field $X_t$ as follows
$$ X_t=X_0(z)+t\sum_{i=1}^{3}\sum_{j=0}^{2}P_{ij}(z_1,z_2,z_3)\frac{\partial}{\partial z_i}$$
where $P_{ij}$ are homogeneous polynomial of degree $j$ for all $i=1,2,3$.
Note that $\fol_t$ has degree 2, for all $t\ne0$. For $P_{ij}$ generic polynomials, Bezout theorem implies that $\fol_t|_{U_3}$ contains 8 isolated points, counted with multiplicities. Let $z_k^t= (z_{1k}^t,z_{2k}^t, z_{3k}^t)$ be one these points, where $k=1,\ldots,8$. Therefore, we have

$$\lim_{t\to0}\bigg( (z_{1k}^t)^2+t\sum_{j=0}^{2}P_{1j}(z_k^t)\bigg)=\lim_{t\to0}  (z_{1k}^t)^2=0 $$
which implies that 
$$ \lim_{t\to0}  z_{1k}^t=0.$$
In the same way, we can conclude $$\lim_{t\to0}  z_{2k}^t=0,$$ i.e.,
$$\lim_{t\to0} z_{k}^t \in C,\,\,\,\,\,\,\,\,\,\, \forall\,\,\, k=1,\ldots,8.$$
The infinite hyperplane $H_3=\P^3\setminus U_3$ is an invariant hypersurface by $\fol_t$ which on it  is described by the vector field
$$Y_t = \bigg(u_1^2-u_1u_2^2+tQ_1(u)\bigg)\frac{\partial}{\partial u_1}+ \bigg(u_1^2-u_2^3+tQ_2(u)\bigg)\frac{\partial}{\partial u_2}$$
where $Q_i(u)=P_{i2}(u_1,u_2,1)-u_iP_{32}(u_1,u_2,1)$, $u_i=\xi_{i-1}/\xi_2$ for $i=1,2$.
On $H_3$, there are 7 singular points of $\fol_t$, counted the multiplicities.  Let $u_k^t= (u_{1k}^t,u_{2k}^t)$ be one these points. In order to compute these singular points, we must solve the following system
$$\left\{\begin{array}{l}
             (u_{1k}^t)^2-(u_{1k}^t)(u_{2k}^t)^2+tQ_1(u_k^t)=0\cr
             (u_{1k}^t)^2-(u_{2k}^t)^3+tQ_2(u_k^t)=0.
            \end{array}\right.$$
With these two equations, we get 
$$ (u_{2k}^t)^3-u_{1k}^t(u_{2k}^t)^2+t(Q_1(u_k^t)-Q_2(u_k^t))=0$$
where we obtain a expression for $u_{1k}^t$. By replacing this expression of $u_{1k}^t$ in the second equation of the above system, we get the following equation
$$ (u_{2k}^t)^6- (u_{2k}^t)^7+tQ_2(u_k^t) (u_{2k}^t)^4+2t (u_{2k}^t)^3(Q_1(u_k^t)-Q_2(u_k^t))+t^2(Q_1(u_k^t)-Q_2(u_k^t))^2=0.$$
            Let 
$(u_{1k},u_{2k})=\displaystyle\lim_{t\to0}u_k^t= \lim_{t\to0}(u_{1k}^t,u_{2k}^t).$
We get 
$$(u_{2k})^6- (u_{2k})^7=0,$$
which implies that either $u_{2k}=0$ or $u_{2k}=1$. If $u_{2k}=0$ then $u_{1k}=0$ and if $u_{2k}=1$ then $u_{1k}=1$ since $u_{1k}^2-u_{1k}u_{2k}^2=u_{1k}^2-u_{2k}^3=0$. 
Hence, we have two possibilities either $\displaystyle\lim_{t\to0}u_k^t=(0,0)$ or $\displaystyle\lim_{t\to0}u_k^t=(1,1)$. The point $p=[1:1:1:0]$ corresponds to $(1,1)$ and $q=[0:0:1:0]\in C$ corresponds to $(0,0)$. Finally, it is not difficult to see that $\mu(\fol_t|_{H_3},q)=6$ which results
$$\mu(\fol_0,C)=(8+6)=14.$$
\end{example}

\begin{example}
Let $\fol$ be the holomorphic foliation defined in $\P^3$  defined in the affine open set $U_3=\{[\xi_0:\xi_1:\xi_2:\xi_3]\in\P^3:\xi_3\ne 0\}$ by the vector field
\begin{eqnarray*}
X_0&=&\bigg(a_0z_1(z_3-1)+a_1z_2(z_1-1)\bigg)\frac{\partial}{\partial z_1}+\bigg(b_0z_1(z_1-1)+b_1z_2(z_3-1)\bigg)\frac{\partial}{\partial
z_2}\\
& &+ z_1\bigg(c_0(z_1-1)+c_1(z_3-1)\bigg)\frac{\partial}{\partial z_3}
\end{eqnarray*}
 where
$z_i=\xi_{i-1}/\xi_3$ for $i=1,2,3$ and $a_i,b_i,c_i$ are non-null complex numbers such that the singular set of $\fol$ consists of two curves $C_1$ and $C_2$ defined by
$$ C_1=\{\xi_0=\xi_1=0\},\,\,\,\,\,\,\,C_2=\{\xi_0-\xi_3=\xi_2-\xi_3=0\}$$ and four isolated points on the hyperplane $H_3=\P^3\setminus U_3$. 
In order to compute the Milnor numbers of $C_1$ and $C_2$, we will use the perturbation $\fol_t$ of $\fol$ which is described in $U_3$ by the vector field $X_t$ given by
$$X_t=X_0 +t\bigg( A(z)\frac{\partial}{\partial z_2}+B(z)\frac{\partial}{\partial z_3}\bigg)$$ 
where $A(z)=\alpha_0z_1^2+\alpha_1z_1z_2+\alpha_2 z_2^2$ and $B(z)=\beta_0(z_1-1)^2+\beta_1(z_1-1)(z_3-1)+\beta_2(z_3-1)^2)$ are generic quadratic functions. 
Note that, $C_1$ and $C_2$ are invariant curves of $\fol_t$ for $t\ne 0$. By Baum-Bott's formula,  there are 3 isolated points of $\sing(\fol_t)$ on $C_1$ and $C_2$ for $t\ne 0$. In fact, the points $p_1=[0:0:1:0]\in H_3$, $p_2=[0:0:z_{31}:1]$ and $p_3=[0:0:z_{32}:1]$ belong to $C_1$, with $B(0,0,z_{3i})=0$, $i=1,2$; and the points $p_4=[0:1:0:0]\in H_3$, $p_5=[1:z_{21}:1:1]$ and $p_6=[1:z_{22}:1:1]$ belong to $C_2$, with $A(1,z_{2i},1)=0$, $i=1,2$. Therefore, the  singular set of $\fol_t$  contains 8 isolated points in the affine open set $U_3$, counting the multiplicities. More precisely, two of these 8 points are on $C_1$, namely, $p_2$ and $p_3$; two these points are on $C_2$, namely $p_5$ and $p_6$. Furthermore, that two of these 8 points converge to $C_1$  and two these points converge to $C_2$ when $t$ tends to $0$. In fact, let $z_t=(z_{1t},z_{2t},z_{3t})\in \big(\sing(\fol_t)\setminus\{C_1\cup C_2)\}\big)\cap U_3$. Thus, we can write $z_{1t}=\lambda_t z_{2t}$ and $z_{3t}-1=\eta_t(z_{1t}-1)$ which results $a_0\lambda_t\eta_t+a_1=0$ and
\begin{equation}\label{sist1}
\left\{\begin{array}{l}
               (b_0\lambda_t+b_1\eta_t)(z_{1t}-1)+ta(\lambda_t)z_{2t}=0\cr
               \lambda_t(c_0+c_1\eta_t)z_{2t}+tb(\eta_t)(z_{1t}-1)=0
               \end{array}\right.
\end{equation}
where $a(\lambda)=\alpha_0\lambda^2+\alpha_1\lambda+\alpha_2$ and $b(\eta)=\beta_0+\beta_1\eta+\beta_2\eta^2$. Given that $z_{1t}-1\ne 0$ and $z_{2t}\ne 0$ we get
$$\lambda_t(b_0\lambda_t+b_1\eta_t)(c_0+c_1\eta_t)-t^2a(\lambda_t)b(\eta_t)=a_0\lambda_t\eta_t+a_1=0.$$
Let $\lambda_t^{(i)}$ be the roots of this last equations, for $i=1,2,3,4$. Reordering, if necessary, we can admit that
$$\lim_{t\to0}\lambda_t^{(1)}=0,\,\,\,\lim_{t\to0}\lambda_t^{(2)}=\frac{a_1c_1}{a_0c_0},\,\,\,\lim_{t\to0}\lambda_t^{(3)}=\sqrt{\frac{a_1b_1}{a_0b_0}},\,\,\,\lim_{t\to0}\lambda_t^{(4)}=-\sqrt{\frac{a_1b_1}{a_0b_0}}.$$
Solving the system (\ref{sist1}), we get $z_t^{(i)}=(z_{1t}^{(i)},z_{2t}^{(i)},z_{3t}^{(i)})$ where
$$ z_{1t}^{(i)} = \frac{b_0(\lambda_t^{(i)})^2+b_1\eta_t^{(i)}\lambda_t^{(i)}}{b_0(\lambda_t^{(i)})^2+b_1\eta_t^{(i)}\lambda_t^{(i)}+ta(\lambda_t^{(i)})},$$
$z_{1t}^{(i)}=\lambda_t^{(i)}z_{2t}^{(i)}$ and $z_{3t}^{(i)}=1+\eta_t^{(i)}(z_{1t}^{(i)}-1)$.
For $i=1,2$ we obtain
$$\lim_{t\to0}z_{1t}^{(i)} = 1$$
which results that 
$$\lim_{t\to0}z_{t}^{(i)} \in C_2.$$
More precisely, 
$$ \lim_{t\to0}z_{t}^{(1)}=p_4,\quad \lim_{t\to0}z_{t}^{(2)}=(1,1/\lambda_0^{(2)},1),\quad \lambda_0^{(2)}=\frac{a_1c_1}{a_0c_0}.$$
Now, from the second equation of (\ref{sist1}), we get
$$z_{2t}=\frac{tb(\eta_t)}{\lambda_t[(c_0+c_1\eta_t)+tb(\eta_t)]}$$
which results 
$$\lim_{t\to0}z_{2t}^{(i)} = 0$$
for $i=3,4$. Thus, in this situation, 
$$\lim_{t\to0}z_{t}^{(i)} \in C_1.$$

In the hyperplane $H_3$, the singular set of $\fol_t$ contains 7 more points, $p_1\in C_1$ and $p_4\in C_2$ being two of those 7 points.  However, given that the Milnor numbers $\mu(\fol|_{H_3},p_1)=1$ and $\mu(\fol|_{H_3},p_4)=2$ we get
$\mu(\fol,C_1)=5$ and $\mu(\fol,C_2)=6$.

\end{example}

\section{The Milnor number and the Poincar\'e-Hopf index}\label{The Milnor}

Let $\fol$ be  a one-dimensional holomorphic foliation on a neighborhood of $(\mathbb{C}^n,0)$, $n\geq 2$, with an isolated singularity at the origin. 
It is well known  (see \cite{CSL}) that the Milnor number $\mu(\fol,0)$ coincides with the Poincar\'e-Hopf index at $0\in\mathbb{C}^n$ of
  any holomorphic vector field generating $\fol$. In particular, the Poincar\'e-Hopf index of a  holomorphic vector field tangent to $\fol$
 depends only on the foliation $\fol$. This property holds also for non-holomorphic vector fields and will be important to give a brief explanation of this
 fact. Let $v$ and $\tilde{v}$ be two continuous vector fields tangent to $\fol$, both with an isolated singularity at $0\in\mathbb{C}^n$. 
Then, if $B$ is a small ball centered at the origin, there exists a continuous map
 $f\colon \partial B \to \mathbb{C}^*$ such  that $\tilde{v}=f v$ on $\partial B$. But it happens that any such map $f$ is necessarily homotopic to a 
constant map. This implies that the vector fields $v$ and $\tilde{v}$ are homotopic as nowhere zero sections of $T\mathbb{C}^n|_{\partial B}$ and
 therefore their Poincar\'e-Hopf indexes coincide. These arguments also work for defining a Milnor number for isolated singularities of 
continuous orientable 2-dimensional distributions on manifolds, as we can see in \cite{GSV}. 

%Essentially, in \cite{CSL} (see also \cite{GSV})  is proved that   $\mu(\fol,0)$  coincides with the ``Poincar\'e-Hopf index'' at $0\in\mathbb{C}^n$ of any continuous real flow tangent to $\fol$.  This is the key to prove that    $\mu(\fol,0)$  is a topological invariant of the foliation. On the other hand,  and also permit us to define 

\par The Poincar\'e-Hopf index for real vector fields is well understood, even for non-isolated singularities. 
Nevertheless, this fact can not be directly used to define a Milnor number for non-isolated singularities of holomorphic foliations: a foliation is not always defined only by a vector field. Furthermore, even if the foliation 
is defined by a vector field, in general, this vector field will  not be unique, as we have seen in the case of isolated 
singularities above. In that case,  the Milnor number is well defined because any continuous map from $\partial B$ to $\mathbb{C}^*$ is 
homotopically trivial. 
This fact is a particular property of the sphere $\partial B$ and need no to be true if $B$ is a neighborhood of a general connected 
component of the singular set of the foliation.  
\par We recall the definition of the Poincar\'e-Hopf index for non-isolated
 singularities of real vector fields. Let $v$ be a continuous vector field on a manifold $M$ and let $S$ be a compact connected component of the  singular set of $v$. 
Let $T$ be a compact neighborhood of $S$  such there are no singularities on $T\backslash S$ and take a vector field  $\tilde{v}$ on $T$ with
 isolated singularities and such that $\tilde{v}=v$ near $\partial T$. Then the Poincaré-Hopf index of $v$ at $S$ is defined as
$$\textrm{Ind}(v,S)=\sum\limits_{p\in\textrm{Sing}(\tilde{v})}\textrm{Ind}(\tilde{v},p),$$ where $\textrm{Ind}(\tilde{v},p)$ denotes the Poincaré-Hopf 
index of $\tilde{v}$ at the isolated singularity $p$.  Of course, this definition is based on the fact that the sum of indexes of $\tilde{v}$ on $T$
 depends only on the vector field $\tilde{v}|_{\partial T}={v}|_{\partial T}$, that is, only depends on $v$. In fact, that sum depends only  on
 the homotopy class of $v$ in the space of nowhere zero sections of  $TM|_{\partial T}$.  Unfortunately, this property is no longer true for holomorphic foliations, as shown the following examples.
\begin{example}\label{ejemplo1} 
Let $v$ and $\tilde{v}$ be two polynomial vector fields on $\mathbb{C}^2$ with isolated singularities,  both with the same 
linear part 
$$(ax+by)\frac{\partial}{\partial x}+(cx+dy)\frac{\partial}{ \partial y},$$ 
where  $ad-bc\neq 0$.
Let $B$ be a small ball centered at the origin and let $T$ be the complement of $B$ in the projective complex plane $\mathbb{CP}^2$. The vector fields $v$ and $\tilde{v}$ define two holomorphic foliations $\fol$ and $\tilde{\fol}$ with isolated singularities on $T$. Since $v$ and $\tilde{v}$ have the same non-degenerated linear part at $0\in\mathbb{C}^2$, for $B$ small enough the vector fields $v$ and $\tilde{v}$ are homotopic as nowhere zero sections of $T\mathbb{C}^2|_{\partial B}$. This implies that, viewed as continuous distributions, $\fol$ and  $\tilde{\fol}$ are homotopic on $\partial T$. Nevertheless, the sum of Milnor numbers on $T$ for the foliations $\fol$ and $\tilde{\fol}$  are not necessarily the same, because this sum depends on the degree of the corresponding foliation.  In fact, by deforming these foliations we easily obtain an example of two continuous distributions on $T$, coinciding on $\partial T$, but with different sums of Milnor numbers on $T$. This shows that a possible definition of the Milnor number for non-isolated singularities of 2-dimensional distributions does not work in the same way as in the case of vector fields, at least in a general setting.
\end{example}
\begin{remark} Example \ref{ejemplo1} also shows that the Poincar\'e-Hopf Index Theorem for holomorphic vector fields proved in \cite[Theorem 1]{ito} can not be extended to one-dimensional holomorphic foliations.
\end{remark}
\begin{example}\label{ejemplo2} Consider the holomorphic vector field $$v=x^2\frac{\partial}{\partial x}+y^n\frac{\partial}{\partial y}$$ on $\mathbb{C}\times\mathbb{C}$. This vector field extends to a vector field on $\overline{\mathbb{C}}\times\mathbb{C}$ with a unique singularity at $(0,0)$ of Milnor number $2n$. It is easy to see that the  vector  field $v$ is transverse to the boundary of the compact domain $$D=\overline{\mathbb{C}}\times\overline{\mathbb{D}}.$$ Nevertheless, the sum of Poincar\'e-Hopf indexes of $v$ on $D$ --- exactly $2n$ --- is not always equal to $\chi(D)=2$. Thus, this example shows that the surjectivity of the natural morphism $H^1(D,\mathbb{Z})\to H^1(\partial D,\mathbb{Z})$, assumed in \cite{ito} as a hypothesis, cannot be removed.
\end{example}

\section{Topological invariance of the Milnor number}\label{Theorem_1}

It is well known \cite{CSL, GSV} that the Milnor number of an isolated singularity of a holomorphic foliation is a topological invariant. Essentially, this theorem is based upon the following two facts.
\begin{enumerate}
\item A holomorphic foliation on a complex surface $V$ near an isolated singularity $p\in V$ is always generated by a vector field.
\item The isolated singularity $p$ has an arbitrarily small neighborhood $B$ in $V$ such that the set $B^*=B\backslash\{p\}$ has the following property: Any continuous map $f\colon B^*\to\mathbb{C}^*$ is homotopically null. In fact, if $B$ is a ball, then $B$ is homotopically equivalent to the 3-dimensional sphere  and we know that $\pi_3(\mathbb{C}^*)=0$. 

\end{enumerate}
We give a sketch of the proof of the topological invariance of the Milnor number of an isolated singularity. Let $\phi$ be a homeomorphism between a neighborhood of $p$ in $V$ to a neighborhood of $p'$ in $V'$ conjugating the foliations $\fol$ and $\fol'$. Let $Z$ be a holomorphic vector field defining $\fol$ near $p$. For the sake of simplicity, we assume that $\phi$ is a $C^1$ diffeomorphism. Then the vector field $Z'=d\phi.Z$ is a continuous vector field tangent to $\fol'$ with an isolated singularity at $p'$. As we have seen in section \ref{The Milnor}, the Milnor number of $\fol'$ at $p'$ is equal to the Poincar\'e-Hopf index of $Z'$. So the topological invariance of the Milnor number follows from the topological invariance of the Poincar\'e-Hopf index. If $\phi$ is only a homeomorphism we still can define $Z'$ as a local continuous real flow with an isolated singularity at $p'$. In this case, we need to extend the definition of the Poincar\'e-Hopf index for isolated singularities of real continuous flows (see \cite{GSV}), and the proof follows essentially in the same way. The importance of the first fact above is evident because it permits us to reduce  the Milnor number to a Poincar\'e-Hopf index. On the other hand, the second fact above is of capital importance, because it guarantees that the Milnor number of a foliation coincides with the Poincar\'e-Hopf index of any continuous flow tangent to the foliation. 
\par In general, if $\fol$ is a holomorphic foliation on a complex manifold $M$ and $C$ is a connected component  of $\textrm{Sing}(\fol)$, the two facts above are not necessarily true, so the topological invariance of the Milnor number $\mu(\fol,C)$ seems to be a nontrivial problem if $C$ has positive dimension.  We present a partial solution to this problem.

\begin{theorem}\label{c1inv} Let  $\fol$ be a one-dimensional holomorphic foliation on a complex three-dimensional manifold $M$ such that $\emph{Sing}(\fol)$ has codimension bigger than one.   Let $\fol'$ be another holomorphic foliation on a complex  three-dimensional manifold $M'$ topologically equivalent to $\fol$ by an orientation preserving $C^1$ diffeomorphism $\phi\colon M\to M'$. We assume that $\phi$ preserves the natural orientation of the leaves.  Let $C$ be a compact connected component  of $\sing(\fol)$. Suppose that $C$ has arbitrarily small neighborhoods $V$ with $H^1(V,\mathbb{Z})=0$.  Then $$\mu(\fol,C)=\mu(\fol',\phi(C)).$$
\end{theorem}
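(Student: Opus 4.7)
The plan is to realize the Milnor number $\mu(\fol,C)$ as the Poincar\'e--Hopf-type degree of the defining section $s$ of $E:=TM\otimes T^*_\fol$ on the boundary of a small compact neighborhood of $C$, transport that degree via the $C^1$ diffeomorphism $\phi$, and then identify it with $\mu(\fol',\phi(C))$ by showing that the transported section and the intrinsic defining section of $\fol'$ differ on the complement of $C':=\phi(C)$ by multiplication by a continuous $\mathbb{C}^*$-valued function that is homotopically trivial.

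Concretely, I would first fix a small open neighborhood $V$ of $C$ with $H^1(V,\mathbb{Z})=0$ (possible by hypothesis) and choose a compact domain $\overline V_0\subset V$ containing $C$ in its interior and with smooth boundary $\partial V_0\subset V\setminus C$. Since $\sing(\fol)$ has complex codimension at least $2$, $C$ has real codimension at least $4$ in $V$, so the long exact sequence for $(V,V\setminus C)$ combined with the vanishing $H^{k}_C(V,\mathbb{Z})=0$ for $k\leq 3$ yields $H^1(V\setminus C,\mathbb{Z})\cong H^1(V,\mathbb{Z})=0$. Because $\mathbb{C}^*$ is a $K(\mathbb{Z},1)$, every continuous map $V\setminus C\to\mathbb{C}^*$ is homotopic to a constant. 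By Definition~\ref{milnor_defi} and the discussion following it, $\mu(\fol,C)$ equals the Poincar\'e--Hopf-type degree of $s|_{\partial V_0}$ as a nowhere-zero section of the rank-$3$ complex bundle $E|_{\partial V_0}$, a degree depending only on the homotopy class of this restricted section.

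Next, I would transport by $\phi$. On the regular locus $V\setminus C$, $T_\fol$ is canonically the subbundle of $TM$ whose fibre at $p$ is the tangent line $T_p\fol$; since $\phi$ is $C^1$, $d\phi$ is a $C^0$ isomorphism $TM|_{V\setminus C}\to TM'|_{V'\setminus C'}$ with $V':=\phi(V)$, and since $\phi$ sends leaves to leaves preserving the natural complex orientation, $d\phi$ restricts to a $C^0$ complex-linear isomorphism $T_\fol\to T_{\fol'}$ over $V\setminus C$. Tensoring yields a $C^0$ isomorphism $\Phi\colon E|_{V\setminus C}\to E'|_{V'\setminus C'}$ with $E':=TM'\otimes T^*_{\fol'}$, and the pushforward $\Phi_*s$ is a nowhere-zero section of $E'|_{V'\setminus C'}$ that extends continuously to a section over $V'$ with zero set exactly $C'$. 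Because $\phi$ preserves the orientations of $M$ and of the leaves, $\Phi$ preserves the natural complex orientations of $E$ and $E'$, so the $C^0$-invariance of the Poincar\'e--Hopf degree gives $\deg((\Phi_*s)|_{\partial V_0'},E')=\deg(s|_{\partial V_0},E)=\mu(\fol,C)$, where $\partial V_0':=\phi(\partial V_0)$.

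Finally, I would compare $\Phi_*s$ with the intrinsic holomorphic defining section $s'$ of $E'$, whose boundary degree on $\partial V_0'$ equals $\mu(\fol',C')$ by Definition~\ref{milnor_defi} applied to $\fol'$. On $V'\setminus C'$ both sections take values in the canonically trivialized line subbundle $T_{\fol'}\otimes T^*_{\fol'}\subset E'$ and are nowhere zero there, so $s'=g\cdot\Phi_*s$ for a continuous function $g\colon V'\setminus C'\to\mathbb{C}^*$. Since $V'$ is homeomorphic to $V$, $H^1(V',\mathbb{Z})=0$ and hence $H^1(V'\setminus C',\mathbb{Z})=0$ by the argument of the second paragraph, so $g$ is homotopic to a constant on $V'\setminus C'$; composing this homotopy with a path in $\mathbb{C}^*$ from that constant to $1$ produces a homotopy through nowhere-zero sections of $E'|_{\partial V_0'}$ between $\Phi_*s|_{\partial V_0'}$ and $s'|_{\partial V_0'}$, so the two boundary degrees coincide and $\mu(\fol,C)=\mu(\fol',C')$. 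The main obstacle is this last step, specifically the homotopy-triviality of $g$, which rests on the cohomological identity $H^1(V\setminus C,\mathbb{Z})=H^1(V,\mathbb{Z})=0$; both the $H^1$-hypothesis on $V$ and the complex codimension-$2$ hypothesis on $\sing(\fol)$ are essential for this identity, and this is also the reason the argument is restricted to three complex dimensions, as will be expanded in Remark~\ref{obs1}.
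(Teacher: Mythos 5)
Your overall architecture (transport the defining section by $\phi$, then compare with the intrinsic section of $\fol'$ via a $\mathbb{C}^*$-valued function that is null-homotopic because $H^1(V\setminus C,\mathbb{Z})=H^1(V,\mathbb{Z})=0$) matches the paper's Proposition~\ref{propinv}, and that part of your argument is essentially correct. But there is a genuine gap at the key step where you build the transport map $\Phi$. You assert that, because $\phi$ sends leaves to leaves preserving their orientation, $d\phi$ restricts to a \emph{complex-linear} isomorphism $T_\fol\to T_{\fol'}$ over the regular locus. This is false: the hypothesis only gives that $d\phi(x)$ carries the real $2$-plane $T_x\fol$ onto $T_{\phi(x)}\fol'$ preserving orientation, and an orientation-preserving real-linear isomorphism between complex lines need not be $\mathbb{C}$-linear (e.g.\ $z\mapsto 2z+\bar z$). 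Worse, $d\phi\colon TM\to TM'$ is only real-linear on fibers, so the ``tensor'' $\Phi$ acting on $E=TM\otimes_{\mathbb{C}}T^*_{\fol}$ is not even well defined: a complex tensor product does not functorially receive real-linear maps of its factors (one needs $A(av)\otimes B(\zeta)=A(v)\otimes B(a\zeta)$, which forces complex-linearity). Your subsequent claims that $\Phi$ preserves the complex orientations of $E$ and $E'$, and that $\Phi_*s$ extends over $V'$ vanishing exactly on $C'$, inherit this gap; for the latter you would also need an isomorphism of the abstract line bundles $T_\fol\to T_{\fol'}$ across $C$ (not just of the geometric tangent subbundles over the regular part), which the paper extracts from the topological invariance of the Chern class of $T_\fol$.

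Repairing this is precisely the content of the paper's Proposition~\ref{c1cover} together with Lemmas~\ref{constancia} and~\ref{deformation}: one splits $d\phi(x)=\partial(d\phi(x))+\bar{\partial}(d\phi(x))$ into complex-linear and complex-antilinear parts and shows that the ``canonical deformation'' $H^t=\partial(d\phi)+t\bar{\partial}(d\phi)$ remains an isomorphism for all $t\in[0,1]$, so that $g=\partial(d\phi)$ is a genuine complex-linear bundle isomorphism still carrying $T_x\fol$ to $T_{\phi(x)}\fol'$. The input needed for this is that at each $p\in C$ the map $d\phi(p)$ carries two distinct --- hence, in $\mathbb{C}^3$, linearly independent --- complex lines (obtained as limits of tangent lines $T_{p_n}\fol$ with $p_n\to p$) onto complex lines. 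This is also the true source of the restriction to $\dim_{\mathbb{C}}M=3$ (three pairwise distinct complex lines can be linearly dependent), contrary to your closing claim that the restriction comes from the identity $H^1(V\setminus C,\mathbb{Z})\cong H^1(V,\mathbb{Z})$; that identity holds in every dimension as soon as $C$ has complex codimension at least $2$.
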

\par Observe that in the statement of Theorem \ref{c1inv} the equivalence $\phi$ is globally defined on $M$. Nevertheless, the manifold $M$ need not be closed and, in particular,  $M$ could be
a  small neighborhood of $C$. We also note that the existence of arbitrarily small neighborhoods of $C$
with vanishing first cohomology group is obviously fulfilled if $C$ is an isolated singularity. 

It is easy to see that Theorem \ref{c1inv} is a direct consequence of the following two propositions.

\begin{proposition}\label{c1cover}
Let  $\fol$ be a one-dimensional holomorphic foliation on a complex three-dimensional manifold $M$ such that $\emph{Sing}(\fol)$ has codimension bigger than one.   Let $\fol'$ be another holomorphic foliation on a complex three-dimensional manifold $M'$ topologically equivalent to $\fol$ by a $C^1$  diffeomorphism $\phi\colon M\to M'$. We assume that $\phi$ preserves the natural orientation of the leaves.  Let $C$ be a compact connected component  of $\emph{Sing}(\fol)$. Then there exist a neighborhood ${\Omega}$ of $C$ and an isomorphism $g\colon TM|_{\Omega}\to TM'|_{\phi({\Omega})}$ of complex vector bundles with the following properties:
\begin{enumerate}
\item $g$  covers the homeomorphism $\phi|_{\Omega}\colon {\Omega}\to \phi({\Omega})$;
\item if $x\in {\Omega}\backslash \emph{Sing}(\fol)$, then $g(T_x\fol)=T_{\phi (x)}\fol'$.
\end{enumerate} 

\end{proposition}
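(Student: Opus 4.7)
My plan is to construct $g$ as a $\mathbb{C}$-linear modification of $d\phi$, building it first on the leaf direction and then extending to the transverse directions. I would begin by setting
\[
g_0 \,:=\, \tfrac{1}{2}\bigl(d\phi - J'\circ d\phi\circ J\bigr),
\]
the $\mathbb{C}$-linear part of $d\phi$, where $J,J'$ are the complex structure tensors on $M,M'$. By construction $g_0$ is a continuous $\mathbb{C}$-linear bundle homomorphism $TM\to\phi^{*}TM'$ covering $\phi$. Since $T\fol$ and $T\fol'$ are $J$- and $J'$-invariant respectively and $d\phi(T\fol)=T\fol'$ outside $\sing(\fol)$, the map $g_0$ sends $T_x\fol$ into $T_{\phi(x)}\fol'$. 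Writing $d\phi|_{T_x\fol}=A_x+B_x$ with $A_x$ the $\mathbb{C}$-linear and $B_x$ the $\mathbb{C}$-antilinear part, the orientation-preserving hypothesis on leaves forces $|A_x|>|B_x|$, so $g_0|_{T_x\fol}=A_x$ is already a $\mathbb{C}$-linear isomorphism and condition~(2) holds for $g_0$.

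The bulk of the work is promoting $g_0$ to a full $\mathbb{C}$-linear bundle isomorphism on a neighborhood of $C$, since the transverse $\mathbb{C}$-linear part of a real isomorphism can well be degenerate. I would fix a Hermitian metric, obtain the orthogonal splittings $TM=T\fol\oplus N\fol$ and $TM'=T\fol'\oplus N\fol'$ on $\Omega\setminus\sing(\fol)$, and cover $C$ by a finite family of opens $U_\alpha$ small enough that $TM|_{U_\alpha}$ and $\phi^{*}TM'|_{U_\alpha}$ are trivial as complex bundles. On each $U_\alpha$, I would keep the leaf component of $g_0$ intact and replace its transverse component by any $\mathbb{C}$-linear isomorphism $N\fol|_{U_\alpha}\to\phi^{*}N\fol'|_{U_\alpha}$ of the trivialized rank-$2$ complex bundles. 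Gluing these local maps with a partition of unity produces a global $\mathbb{C}$-linear bundle homomorphism; by refining the cover, convex combinations of sufficiently close isomorphisms remain isomorphisms, yielding a bundle iso $g$ on $\Omega\setminus\sing(\fol)$ satisfying condition~(2).

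Finally, to extend $g$ continuously across $\sing(\fol)$ — where condition~(2) is vacuous — I would work with the abstract line bundle $T_\fol$, which is defined on all of $M$, and encode the leaf condition as the commutativity $g\circ s=(\phi^{*}s')\circ h$ for some $\mathbb{C}$-linear line bundle isomorphism $h\colon T_\fol|_\Omega\to\phi^{*}T_{\fol'}|_\Omega$; here $s\colon T_\fol\to TM$ is the holomorphic section defining $\fol$ (vanishing on $\sing(\fol)$) and similarly $s'$. Shrinking $\Omega$ so that $T_\fol|_\Omega\cong\phi^{*}T_{\fol'}|_\Omega$, one picks $h$ and builds a compatible $g$ that now extends continuously. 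I expect the main obstacle to be precisely this continuous extension across $\sing(\fol)$: the subbundle $T\fol\subset TM$ does not extend as a subbundle across $\sing(\fol)$, so the splitting $TM=T\fol\oplus N\fol$ breaks down exactly where continuity has to be verified. The $3$-dimensionality (so $N\fol$ is rank $2$) together with the complex codimension $\geq 2$ of $\sing(\fol)$ should be what lets the construction descend to a global continuous isomorphism.
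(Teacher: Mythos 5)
Your starting point coincides with the paper's: the candidate isomorphism is the complex-linear part $\partial(d\phi)=\tfrac12(d\phi-J'\circ d\phi\circ J)$, and your observation that orientation preservation along leaves forces $|A_x|>|B_x|$, so that $\partial(d\phi)$ restricted to $T_x\fol$ is already a complex isomorphism, is exactly the determinant computation in the paper's Lemma~\ref{deformation}. But from there your argument has a genuine gap, and you have located it yourself without closing it. Your plan is to discard the (possibly degenerate) transverse part of $\partial(d\phi)$ on $\Omega\setminus\sing(\fol)$, replace it by locally chosen complex isomorphisms of normal bundles glued by a partition of unity, and then extend across $\sing(\fol)$. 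Two steps fail. First, the gluing: $GL(2,\mathbb{C})$ is not convex, so convex combinations of local isomorphisms are isomorphisms only if the local choices are uniformly close to one another on overlaps, and there is no canonical reference making arbitrary local choices close --- you would need precisely the global object whose existence is at issue. Second, and more seriously, the extension across $\sing(\fol)$: the splitting $TM=T\fol\oplus N\fol$ degenerates exactly on $\sing(\fol)$, so a transverse component chosen arbitrarily off the singular set has no reason to admit a continuous limit on $C$. You say you ``expect'' dimension three and codimension $\geq 2$ to make this work, but no mechanism is given; this is the entire difficulty of the proposition.

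The paper's resolution is different and avoids any modification of $\partial(d\phi)$: it proves that \emph{at each point $p\in C$ itself} the real isomorphism $d\phi(p)$ maps at least two distinct complex lines of $T_pM$ onto complex lines of $T_{\phi(p)}M'$, namely limits $L=\lim T_{p_n}\fol$ of foliation tangents (the set of such limit lines is connected and, by the codimension-$\geq 2$ hypothesis, cannot reduce to a single line, else $\fol$ would be regular at $p$). In $\dim_{\mathbb{C}}=3$, two distinct complex lines are automatically linearly independent, so Lemma~\ref{deformation} applies with $n-1=2$ and yields that $\partial(d\phi(p))$ is an isomorphism of all of $T_pM$ for $p\in C$; compactness of $C$ and continuity of $d\phi$ then give invertibility on a whole neighborhood $\Omega$, and Lemma~\ref{constancia} gives property (2). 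The key idea you are missing is this production of a second preserved complex line at the singular points themselves, which is what makes the transverse direction come along for free and renders the extension problem moot.
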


\begin{proposition}\label{propinv}
Let $M$ be a complex manifold such that $H^1(M,\mathbb{Z})=0$.  Let  $\fol$ be a one-dimensional holomorphic foliation on $M$ such that $\emph{Sing}(\fol)$ has codimension bigger than one.   Let $\fol'$ be another holomorphic foliation on a complex manifold $M'$ topologically equivalent to $\fol$ by an orientation preserving homeomorphism $\phi\colon M\to M'$.  Let $C$ be a compact connected component  of $\emph{Sing}(\fol)$. Suppose that there exists an isomorphism $g\colon TM\to TM'$ of complex vector bundles with the following properties:
\begin{enumerate}
\item $g$  covers the homeomorphism $\phi\colon M\to M'$;
\item if $x\in M\backslash \emph{Sing}(\fol)$, then $g(T_x\fol)=T_{\phi (x)}\fol'$.
\end{enumerate} 
Then
 $$\mu(\fol,C)=\mu(\fol',\phi(C)).$$
\end{proposition}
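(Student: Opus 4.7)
The plan is to exploit the homotopy-invariance property of the intersection number recalled in Section \ref{def_milnor}: for a small neighborhood $U$ of $C$ with $\partial U$ disjoint from $\sing(\fol)$, the value $i_U(s,s_0)$ depends only on the homotopy class of $s|_{\partial U}$ among nowhere-zero sections of $E|_{\partial U}$, where $E=TM\otimes T^{*}_\fol$. My goal is therefore to produce an isomorphism of continuous complex vector bundles $G\colon E\to \phi^{*}E'$ covering the identity on $M$ such that $G\circ s$ is homotopic to $\phi^{*}s'$ through nowhere-zero sections on $\partial U$. Combined with the orientation-preservation of $\phi$ and the invariance of the intersection number under bundle isomorphism, this will yield $\mu(\fol,C)=\mu(\fol',\phi(C))$.

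To build $G$, I observe that $g$ already supplies an isomorphism $TM\to\phi^{*}TM'$, so only an isomorphism $\beta\colon T^{*}_\fol\to \phi^{*}T^{*}_{\fol'}$ of continuous complex line bundles on $M$ is needed. Off $\sing(\fol)$, the property $g(T\fol)=\phi^{*}T\fol'$ supplies such an isomorphism by restriction and dualization, so the question is whether it extends across $\sing(\fol)$. Since $\sing(\fol)$ has real codimension at least four in $M$, we have $H^{i}(M,M\setminus\sing(\fol);\mathbb{Z})=0$ for $i<4$, and the long exact sequence of the pair together with the hypothesis $H^1(M,\mathbb{Z})=0$ yields both $H^1(M\setminus\sing(\fol),\mathbb{Z})=0$ and an isomorphism $H^2(M,\mathbb{Z})\cong H^2(M\setminus\sing(\fol),\mathbb{Z})$. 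Since complex line bundles on a paracompact space are classified by $H^{2}(\cdot,\mathbb{Z})$, the bundles $T^{*}_\fol$ and $\phi^{*}T^{*}_{\fol'}$ are isomorphic on all of $M$; I fix any such $\beta$ and set $G:=g\otimes\beta$.

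Next I compare $G\circ s$ and $\phi^{*}s'$ as sections of $\phi^{*}E'$. On $M\setminus\sing(\fol)$, both land in the common line sub-bundle $\phi^{*}(T\fol'\otimes T^{*}_{\fol'})\subset\phi^{*}E'$, so the quotient $\lambda:=(G\circ s)/(\phi^{*}s')$ is a well-defined continuous map $M\setminus\sing(\fol)\to\mathbb{C}^{*}$. By Bruschlinsky's theorem $[X,S^1]\cong H^{1}(X,\mathbb{Z})$, the vanishing $H^{1}(M\setminus\sing(\fol),\mathbb{Z})=0$ forces $\lambda$ to be null-homotopic. Any null-homotopy $(\lambda_t)$ yields the desired homotopy $s_t:=\lambda_t\cdot\phi^{*}s'$ from $G\circ s$ to $\phi^{*}s'$ through nowhere-zero sections of $\phi^{*}E'$ over $M\setminus\sing(\fol)$. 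Choosing $U$ small enough that $\overline{U}\cap\sing(\fol)=C$, restriction to $\partial U$ and the chain of equalities
\[
\mu(\fol,C)=i_U(s,s_0)=i_U(G\circ s,s_0)=i_U(\phi^{*}s',s_0)=i_{\phi(U)}(s',s_0')=\mu(\fol',\phi(C))
\]
closes the argument, where the third equality is the homotopy-invariance of $i_U$ and the fourth uses that $\phi$ is an orientation-preserving homeomorphism.

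The principal technical hurdle I anticipate is the cohomological input in the second paragraph: one must check carefully that the codimension-two hypothesis on $\sing(\fol)$ combined with $H^1(M,\mathbb{Z})=0$ produces both the extension of the line-bundle isomorphism $T^{*}_\fol\cong\phi^{*}T^{*}_{\fol'}$ across $\sing(\fol)$ and the null-homotopy of $\lambda$. Everything else—the bundle-theoretic construction of $G$, the identification of the common line sub-bundle in which the two sections live, and the appeal to the homotopy characterization of $i_U$—is essentially formal once these ingredients are in place.
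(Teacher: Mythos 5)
Your proof is correct and follows the same overall architecture as the paper's: tensor $g$ with an isomorphism of the (co)tangent line bundles of the two foliations, observe that the transported section differs from $s'$ by a continuous $\mathbb{C}^*$-valued function on the complement of the singular set, kill that function using the vanishing of $H^1$, and conclude by the homotopy invariance of the intersection number. The one genuine difference is where the line-bundle isomorphism $T^*_{\fol}\cong \phi^* T^*_{\fol'}$ comes from. The paper invokes \cite[Theorem 5.1]{GSV}, which asserts that $\phi^*$ carries the Chern class of $T_{\fol}$ to that of $T_{\fol'}$, and then uses that $c_1$ is a complete invariant of complex line bundles. You instead extract the isomorphism from hypothesis (2): $g$ restricts to an isomorphism of the tangent line fields $T\fol\to\phi^*T\fol'$ over $M\setminus\sing(\fol)$, these are canonically identified with $T_{\fol}$ and $\phi^*T_{\fol'}$ there (via the nowhere-zero section $s$), and the injectivity of the restriction $H^2(M,\mathbb{Z})\to H^2(M\setminus\sing(\fol),\mathbb{Z})$ --- valid because $\sing(\fol)$ has real codimension at least four --- transports the equality of Chern classes to all of $M$. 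Your version is more self-contained, avoiding the external GSV result and actually deriving the line-bundle isomorphism from the stated hypothesis on $g$, at the cost of one extra (standard) cohomological fact about removing a high-codimension analytic set; the paper's citation buys brevity. Everything else --- the $\mathbb{C}^*$-quotient $\lambda$, the Bruschlinsky argument, and the chain of equalities of intersection numbers using that $\phi$ preserves orientation --- matches the paper step for step.
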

In order to prove Proposition \ref{c1cover} we need  two lemmas that we state and prove below.
Let $E$ and $E'$ be finite-dimensional complex vector spaces and let 
$\sigma\colon E\to E'$ be a real-linear map. We say that $\sigma$ is complex-antilinear if
 $\sigma(ax)=\bar{a}\sigma(x)$ for all $a\in\mathbb{C}$, $x\in E$. It is well known that any
 real-linear map $\sigma\colon E\to E'$  can be expressed in a unique way as
$$\sigma=\partial\sigma+\bar{\partial}\sigma,$$ were $\partial\sigma\colon E\to E'$ is complex-linear and $\bar{\partial}\sigma\colon E\to E'$ is complex-antilinear. For each 
$t\in[0,1]$ define the real linear map 
$$H_{\sigma}^t=\partial\sigma+t\bar{\partial}\sigma.$$ The family $\{H_{\sigma}^t\}_{t\in[0,1]}$ will be called the \emph{canonical deformation} of $\sigma$. Since $H_{\sigma}^t$ depends continuously on $\sigma$, this canonical deformation will be useful in the construction of deformations of  real isomorphism of complex bundles. Let  $e_1,\ldots,e_n$ be a base of $E$. As a real vector space, $E$ can be endowed with the natural orientation defined by the basis $e_1, ie_1,\ldots, e_n,ie_n$. We do the same with $E'$. A subspace $L\subset E$ is called a complex line if $\dim_{\mathbb{C}}L=1$; in this case, $L=\mathbb{C} v$ for any nonzero element $v\in L$ and, as a real vector space,  the complex line $L$  has the natural orientation defined by the basis $\{v,iv\}$. 

\begin{lemma}\label{constancia} let $\sigma\colon E\to E'$ be any real-linear map between the complex vector spaces $E$ and $E'$. Consider the the canonical deformation $H_{\sigma}^t$  of $\sigma$. Let $L$ and $L'$ be  complex lines in $E$ and $E'$ respectively, and suppose that $\sigma (L)\subset L'$. Then, for each $t\in [0,1]$ we have  $H_{\sigma}^t(L)\subset L'$.  
\end{lemma}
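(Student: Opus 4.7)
The plan is to show the stronger statement that \emph{both} components of the canonical decomposition individually map $L$ into $L'$, i.e.\ $\partial\sigma(L)\subset L'$ and $\bar\partial\sigma(L)\subset L'$. Once this is established, the conclusion for $H_{\sigma}^t=\partial\sigma+t\bar\partial\sigma$ follows immediately from the fact that $L'$ is a complex (and therefore in particular a real) linear subspace of $E'$, so any real-linear combination of vectors in $L'$ stays in $L'$.

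To prove the stronger claim, I would fix a nonzero $v\in L$, so that $L=\mathbb{C}v$, and exploit the fact that $iv$ also belongs to $L$. Applying $\sigma$ to $v$ and to $iv$ and using complex-linearity of $\partial\sigma$ together with complex-antilinearity of $\bar\partial\sigma$, one obtains the two relations
\begin{equation*}
\sigma(v)=\partial\sigma(v)+\bar\partial\sigma(v)\in L',\qquad
\sigma(iv)=i\,\partial\sigma(v)-i\,\bar\partial\sigma(v)\in L'.
\end{equation*}
Since $L'$ is a \emph{complex} line, it is closed under multiplication by any complex scalar; in particular, multiplying the second relation by $-i$ gives $\partial\sigma(v)-\bar\partial\sigma(v)\in L'$. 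Adding and subtracting the two relations yields $\partial\sigma(v)\in L'$ and $\bar\partial\sigma(v)\in L'$ separately. To extend this to all of $L$, note that every $w\in L$ has the form $w=av$ for some $a\in\mathbb{C}$, so $\partial\sigma(w)=a\,\partial\sigma(v)\in L'$ and $\bar\partial\sigma(w)=\bar{a}\,\bar\partial\sigma(v)\in L'$, using once more that $L'$ is stable under multiplication by complex scalars.

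Combining the two inclusions, for every $w\in L$ and every $t\in[0,1]$ we conclude that
\begin{equation*}
H_{\sigma}^{t}(w)=\partial\sigma(w)+t\,\bar\partial\sigma(w)\in L',
\end{equation*}
which is the desired statement. There is no real obstacle here; the only subtlety is to carefully separate the $\mathbb{R}$-linear structure (needed to form the real combination $\partial\sigma+t\bar\partial\sigma$) from the $\mathbb{C}$-linear/antilinear behaviour of the two summands, and to use at the decisive moment that $L'$ is complex (not merely real) one-dimensional, so that multiplication by $-i$ preserves it.
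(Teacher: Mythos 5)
Your proof is correct and follows essentially the same route as the paper: evaluate $\sigma$ at $v$ and $iv$, use the (anti)linearity of the two components to isolate $\partial\sigma(v)$ and $\bar\partial\sigma(v)$ inside $L'$, and conclude for $H_\sigma^t$. You simply spell out in more detail the linear-algebra step (multiplying by $-i$, then adding and subtracting) that the paper leaves implicit.
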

\begin{proof}Let $v\in L$. Since $L\subset L'$, we have that $$\sigma(v)=\partial\sigma(v)+\bar{\partial}\sigma(v)$$ and 
$$\sigma(iv)=i\partial\sigma(v)-i\bar{\partial}\sigma(v)$$ are contained in $L'$. From this we obtain  that $\partial\sigma(v)$ and  $\bar{\partial}\sigma(v)$ are contained in $L'$ and therefore, given $t\in[0,1]$, we have that $$H_{\sigma}^t(v)=\partial\sigma(v)+t\bar{\partial}\sigma(v)$$ is contained in $L'$. 
\end{proof}

\begin{lemma}\label{deformation} Assume that $\dim_{\mathbb{C}}E=\dim_{\mathbb{C}}E'=n\in\mathbb{N}$, let $\sigma\colon E\to E'$ be an orientation preserving real-linear isomorphism, and consider the  canonical deformation $H_{\sigma}^t$  of $\sigma$.  Suppose that there exist $n-1$ linearly independent complex lines $L_1,\ldots L_{n-1}$ in $E$ such that each $L_j$ is mapped by $\sigma$ onto a complex line in $E'$ preserving the natural orientations of complex lines.  Then, for each $t\in [0,1]$, the map $H_{\sigma}^t\colon E\to E'$ is an orientation preserving real-linear isomorphism. In particular, $H_{\sigma}^0=\partial\sigma$ is a complex-linear isomorphism.  
\end{lemma}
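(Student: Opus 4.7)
My plan is to reduce everything to verifying that the real determinant of $H_\sigma^t$ is strictly positive for every $t\in[0,1]$.  This already yields the first assertion (orientation-preserving real isomorphism), and at $t=0$ it gives the second:  $\partial\sigma=H_\sigma^0$ is complex linear by construction, and a complex-linear real isomorphism between complex spaces of equal complex dimension is automatically a complex-linear isomorphism.

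To put $H_\sigma^t$ in block form, set $L_j':=\sigma(L_j)$, $N:=L_1\oplus\cdots\oplus L_{n-1}$, and $N':=L_1'\oplus\cdots\oplus L_{n-1}'$.  The sum defining $N'$ is direct because if $\sum_j \sigma(w_j)=0$ with $w_j\in L_j$, then injectivity of $\sigma$ gives $\sum_j w_j=0$, whence each $w_j=0$ by the hypothesized complex-linear independence of the $L_j$.  Lemma~\ref{constancia} gives $H_\sigma^t(L_j)\subset L_j'$ for all $t$, so $H_\sigma^t(N)\subset N'$.  Fix complex complements $L_n$ of $N$ in $E$ and $L_n'$ of $N'$ in $E'$.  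With respect to the decompositions $E=N\oplus L_n$ and $E'=N'\oplus L_n'$, the operator $H_\sigma^t$ is block upper triangular with diagonal blocks $A(t)=H_\sigma^t|_N$ and $D(t)=\pi\circ H_\sigma^t|_{L_n}$, where $\pi\colon E'\to L_n'$ is the projection with kernel $N'$.  Since $A(t)$ is the direct sum of the restrictions $H_\sigma^t|_{L_j}\colon L_j\to L_j'$,
\[\det H_\sigma^t=\det D(t)\cdot\prod_{j=1}^{n-1}\det(H_\sigma^t|_{L_j}).\]

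The key computation is a single $2\times 2$ determinant on a complex line.  If $L,L'$ are complex lines, $v$ generates $L$, and $f=f_1+sf_2\colon L\to L'$ is the sum of a complex-linear $f_1$ and a complex-antilinear $sf_2$, then writing everything in real bases $\{v,iv\}$ and $\{u,iu\}$ yields the identity $\det f=|f_1(v)|^2-s^2|f_2(v)|^2$.  Applied to $H_\sigma^t|_{L_j}$ with $v_j\in L_j$ a generator, this gives $\det(H_\sigma^t|_{L_j})=|\partial\sigma(v_j)|^2-t^2|\bar\partial\sigma(v_j)|^2$; at $t=1$ the value is $\det(\sigma|_{L_j})$, which is positive by the orientation-preserving hypothesis on each $L_j$, so $|\partial\sigma(v_j)|>|\bar\partial\sigma(v_j)|$ and the expression stays positive for every $t\in[0,1]$.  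The same identity applied to $D(t)$---using that $\pi\circ\partial\sigma|_{L_n}$ is complex linear and $\pi\circ\bar\partial\sigma|_{L_n}$ is complex antilinear---gives $\det D(t)=|a|^2-t^2|b|^2$ for the analogous $a,b\in L_n'$.  At $t=1$ the block-triangular decomposition of $\sigma$ itself yields $\det\sigma=\det(\sigma|_N)\cdot\det D(1)$ with $\det\sigma>0$ by hypothesis and $\det(\sigma|_N)>0$ by the $t=1$ case of the product formula above; hence $\det D(1)>0$ and so $\det D(t)>0$ on $[0,1]$.  The only delicate step is precisely this orientation bookkeeping for $D(1)$, since its positivity is contingent on first knowing $\det(\sigma|_N)>0$; the arguments must therefore be organized in the correct order.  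Combining everything, $\det H_\sigma^t>0$ throughout $[0,1]$, and both conclusions of the lemma follow.
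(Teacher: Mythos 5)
Your proof is correct and follows essentially the same route as the paper's: the paper simply normalizes coordinates so that each $L_j$ and $\sigma(L_j)$ is the $j$th complex axis, obtains the same block-triangular structure you describe, and concludes with the identical $2\times 2$ computation $\det(\partial A_j+t\bar{\partial}A_j)=|a|^2-t^2|b|^2\ge\det A_j>0$, with the same order of deductions ($\det A_j>0$ for $j<n$ from the orientation hypothesis on the lines, then $\det A_n>0$ from $\det\sigma>0$). Your coordinate-free phrasing via $N=L_1\oplus\cdots\oplus L_{n-1}$ and a complex complement is a presentational variant, not a different argument.
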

\begin{proof}Without loss of generality we can assume that
$ E=E'=\mathbb{C}^n$ and that, for $j=1,\ldots,n-1$,  both $L_j$ and $\sigma(L_j)$ are equal to  the $jth$ complex axis of $\mathbb{C}^n$. Then $\sigma$ preserves each of the first $n-1$ axes and we can express $\sigma$ as a $n\times n$ matrix 
$$\sigma=\begin{bmatrix}A_1&0 & 0&\ldots &0&B_1\\
0&A_2&0&\ldots&0&B_2\\
0&0&A_3&\ldots&0&B_3\\
\vdots&\vdots& \vdots &\ddots&\vdots&\vdots\\
0&0&0&\ldots & A_{n-1}&B_{n-1}\\
0&0&0&\ldots & 0&A_{n}\\
\end{bmatrix},$$ whose entries are real $2\times 2$ matrices. So, it is easy to see that 
$$H_{\sigma}^t=\begin{bmatrix}\partial A_1+t\bar{\partial}A_1&0 & \ldots &0&\partial B_1+t\bar{\partial}B_1\\
0&\partial A_2+t\bar{\partial}A_2&\ldots&0&\partial B_2+t\bar{\partial}B_2\\

\vdots&\vdots&\ddots&\vdots&\vdots\\
0&0&\ldots & \partial A_{n-1}+t\bar{\partial}A_{n-1}&\partial B_{n-1}+t\bar{\partial}B_{n-1}\\
0&0&\ldots & 0&\partial A_{n}+t\bar{\partial}A_{n}\\
\end{bmatrix},$$ hence
$$\det H_{\sigma}^t=\det(\partial A_1+t\bar{\partial}A_1)\ldots \det(\partial A_n+t\bar{\partial}A_n).$$ Therefore, it suffices to show that $$\det(\partial A_j+t\bar{\partial}A_j)>0$$ for all $t\in[0,1]$, $j=1,\ldots,n$.
Since $\sigma$ preserves the orientation of each of the first $n-1$ axes, we have that $\det A_1,\ldots,\det A_{n-1}>0$. Then, since by hypothesis $\det \sigma$ is positive, we also have $\det A_n>0$. Given $j=1,\ldots,n$, there are constants $a,b\in\mathbb{C}$ such that $\partial A_j(z)=az$, $\bar{\partial}A_j(z)=b\bar{z}$,    so by a direct computation we obtain  $$\det(\partial A_j+t\bar{\partial}A_j)=|a|^2-t^2|b|^2\ge |a|^2-|b|^2=\det(A_j)>0.$$

\end{proof}

\subsection{Proof of Proposition \ref{c1cover}.} We start the proof with the following assertion.\\
\noindent\emph{Assertion.} If $p\in C$, then there exist infinitely 
many complex lines $L$ in  $T_p M$ such that $d\phi(p)(L)$ is a complex line in $T_{\phi (p)} M'$. In fact, consider the set $A$ of the complex lines 
$L$ in $T_p M$ such that $L=\lim T_{p_n}\fol$ for some sequence $(p_n)$ in $M\backslash \operatorname{Sing}(\fol)$ with
 $p_n\rightarrow p$. Given any such complex line $L$ in $A$, since $\phi$ maps leaves of $\fol$ to leaves of $\fol'$ we have $$d\phi (p_n)\big( T_{p_n}\fol\big)=T_{\phi(p_n)}\fol'\textrm{ for all }n\in\mathbb{N}.$$ 
 Thus, since $\phi\in C^1$ and the space of complex lines in $TM'$ is closed, the real linear space
 $$d\phi (p)(L)=\lim d\phi ({p_n})\big( T_{p_n}\fol\big)=\lim T_{\phi(p_n)}\fol' $$ is in fact a complex line in $T_{\phi(p)}M'$. So it suffices to prove that the set $A$ is infinite. It is easy to see that the set $A$ is non-empty and connected; thus, if $A$ is finite, we need to assume that  it is unitary, say $A=\{L\}$. From this we obtain the following implication: 
 \begin{equation}\label{tile1}\zeta\rightarrow p,\; \zeta\in M\backslash\operatorname{Sing}(\fol) \implies\lim T_\zeta \fol=L .\end{equation}
Consider holomorphic coordinates $(x,y,z)$ at $p$ such that $p=(0,0,0)$, $L=[0:0:1]$ and take a holomorphic vector field $$a\frac{\partial}{\partial x}+b\frac{\partial}{\partial y}+c\frac{\partial}{\partial z}$$ generating $\fol$ at $p$. The property \eqref{tile1} implies that for $\zeta\in M\backslash\operatorname{Sing}(\fol) $ close enough to $p$ we have $c(\zeta)\neq 0$. Thus, since $\operatorname{Sing}(\fol)$ has codimension $\ge 2$ we deduce that $c(\zeta)\neq 0$ for all $\zeta$ in a neighborhood of $p$, hence 
$\fol$ is regular at $p$, which is a contradiction and the assertion is proved.
\par  By the assertion above we can take two different complex lines $L_1$ and $L_2$ in $T_p M$ such
 that $d\phi(p)(L_1)$ and $d\phi(p)(L_2)$ are complex lines in $T_{\phi (p)} M'$. Observe that $L_1\neq L_2$ implies that $L_1, L_2$ are linearly independent. 
 So, it follows by Lemma \ref{deformation} that, for any $p\in S$, the canonical deformation
  $H_{d\phi (p)}^t$ of $d\phi (p)$ is an orientation preserving isomorphism for each $t\in[0,1]$. Since $S$ is compact and $d\phi$ is
   continuous, we can find a neighborhood $\Omega$ of $S$ in $M$ such that the
    canonical deformation $H_{d\phi (\zeta)}^t$ of $d\phi (x)$ is an orientation preserving isomorphism for each $t\in[0,1]$  $x\in\Omega$. This allows us to deform isotopically the real isomorphism of 
    complex bundles $$d\phi: TM|_{\Omega}\to TM'|_{\phi (\Omega)}$$ 
    into the complex isomorphism
$$g\colon TM|_{\Omega}\to TM'|_{\phi (\Omega)}$$ defined by 
$g|_{T_x M}= \partial(d\phi (x))$ for all $x\in \Omega$. The first statement of Proposition \ref{propinv} is clearly satisfied by $g$. Let $x\in \Omega\backslash \operatorname{Sing}(\fol)$. Since $d\phi (x) (T_x\fol)=T_{\phi (x)}\fol'$, it follows from Lemma \ref{constancia} that $g (T_x\fol)=T_{\phi (x)}\fol'$, so the second statement is proved.\qed
\begin{remark}\label{obs1}
Observe that three pairwise distinct complex lines can be linearly dependent. For this reason, our proof of Proposition \ref{c1cover} only works in dimension three. 
\end{remark}

\subsection{Proof of Proposition  \ref{propinv}.} 
By \cite[Theorem 5.1]{GSV}, the isomorphism $$\phi^*\colon H^2(M,\mathbb{Z})\to H^2(M',\mathbb{Z})$$ induced by $\phi$ maps the Chern class of $T_{\fol}$ onto the Chern class of $T_{\fol'}$. Since the Chern Class is a complete invariant  in the classification of complex line bundles up to isomorphism, we have that there exists an isomorphism $\xi\colon T_{\fol}\to T_{\fol'}$ covering the map $\phi\colon M\to M'$. Consider the dual of the inverse isomorphism $\xi^{-1}$, 
$$f=(\xi^{-1})^*:T_{\fol}^*\to T_{\fol'}^*$$ and  $$h:=g\otimes f\colon TM\otimes T_{\fol}^*\to  TM'\otimes T_{\fol'}^*.$$  Let $s$ be a section of $TM\otimes T_{\fol}^*$ defining $\fol$. Denote by $s_0$ and $s_0'$ the zero sections of $TM\otimes T_{\fol}^*$ and $ TM'\otimes T_{\fol'}^*$, respectively.  By the topological invariance of the intersection number we have that 
\begin{equation}\label{milnor_1}
\mu(\fol,C)=i_{C}(s,s_0)=i_{C'}(h\circ s,s_0'),
\end{equation} 
where $C'=\phi(C)$.\\
\noindent\emph{Assertion.} Let $s'$ be a section of $TM'\otimes T_{\fol'}^*$ defining $\fol'$. Then there exists $\theta\colon M\backslash C \to\mathbb{C}^*$ continuous such that $$h\big(s(x)\big)=\theta(x) s'\big(\phi(x)\big),$$ for all $x\in\ M\backslash C$. Fix $x\in M\backslash C$. Since $\fol$ is defined by the section $s$, there exists
 $\zeta\in (T_{\fol})_x$ and $v\in T_x\fol\subset T_xM$ such that $s(x)=v\otimes \zeta$, so $$h\big(s(x)\big)=g(v)\otimes f(\zeta).$$
Since $\fol'$ is defined by the section $s'$, there exist $\zeta'\in (T_{\fol'})_{\phi(x)}$ and $v'\in T_{\phi(x)}\fol'\subset T_{\phi(x)}M'$ such that $s'\big(\phi(x)\big)=v'\otimes \zeta'$. Since $v\in T_x\fol$, by hypothesis we have that $g(v)\in T_{\phi(x)}\fol'$, so there exists $\alpha\in\mathbb{C}^*$ such that  $g(v)=\alpha v'$. Therefore $$h\big(s(x)\big)= g(v)\otimes f(\zeta)=(\alpha v')\otimes f(\zeta)=\alpha  (v'\otimes f(\zeta))=\theta (v'\otimes\zeta')=\theta s'\big(\phi(x)\big)$$ for some $\theta\in\mathbb{C}^*$ (here $\alpha f(\zeta)=\theta \zeta'$). It is easy to see that $\theta$ depends continuously on $x\in M\backslash C$.
\par Since $C$ has complex codimension at least 2 in $M$, we have  $H^1(M\backslash C,\mathbb{Z})=H^1(M,\mathbb{Z})=0$. From this fact, it is easy to prove that the map $\theta:M\backslash C\to\mathbb{C}^*$ is homotopic to a constant map. Then, far from $C'$,   the section $h(s)$ of $TM'\otimes T_{\fol'}^*$ can be deformed to coincide with $s'$ with no variation in the intersection number with the zero section. Thus, we deduce that 
$$i_{C'}(h\circ s,s_0')=i_{C'}(s',s_0')=\mu(\fol',C')$$ and therefore $\mu(\fol',C')=\mu(\fol,C),$ by equation (\ref{milnor_1}).\qed 

\vspace{1cm}

\noindent {\bf Acknowledgments.} 
The authors wish to express his gratitude to Miguel Rodr\'iguez Pe\~na for several helpful comments concerning to work.

\end{document}